\documentclass[12pt]{amsart}%
\usepackage{amsfonts}
\usepackage{graphicx}
\usepackage{amsmath}
\usepackage{amssymb}%
\usepackage{color}
\setcounter{MaxMatrixCols}{30}
\setlength{\textwidth}{5.5in}
\newtheorem{theorem}{Theorem}

\newtheorem{lemma}{Lemma}

\newtheorem{remark}{Remark}

\theoremstyle{remark}
\newtheorem{example}{\bf Example}

\input epsf
\textwidth=15cm
\textheight=22cm
\hoffset=-1.5cm
\voffset=-0.3cm

\begin{document}

\title[Jumps in spectral networks]{Combinatorial description of jumps in spectral networks}
\author{Anastasia Frolova and Alexander Vasil'ev}

\address{Department of Mathematics,
University of Bergen, P.O.~Box~7800, Bergen N-5020, Norway}

\email{Anastasia.Frolova@uib.no} \email{Alexander.Vasiliev@math.uib.no}

\thanks{ The authors have been  supported by the grants of the Norwegian Research Council \#239033/F20,
\#213440/BG; and EU FP7 IRSES program STREVCOMS, grant  no.
PIRSES-GA-2013-612669. }


\subjclass[2010]{Primary 58K20; Secondary 30F30, 52B11, 58K15, 81T40, 81T60}

\keywords{Spectral network, quadratic differential, Stokes line, weighted diagram, Stasheff polytope}


\begin{abstract}
We describe a graph parametrization of rational quadratic differentials with presence of a simple pole, whose critical trajectories form a network depending on parameters  focusing on the network topological jumps. Obtained bifurcation diagrams are associated with the Stasheff polytopes.
\end{abstract}
\maketitle

\begin{section}{Introduction}

The problem of BPS (Bogomol'nyi--Prasad--Sommerfield) wall crossing have received much attention the last decade, see e.g, \cite{Cecotti, Cecotti2, Cecotti3, Gaiotto, Joyce, Kontsevich}. In physics terms, a supersymmetric particle may change from stable to unstable crossing loci (walls) in a parameter space. Considering four-dimensional $\mathcal N=2$ theories coupled to surface defects, particularly the theories of class $S$, see \cite{Witten},  Gaiotto, Moore, and Neitzke \cite{Gaiotto2} introduced {\it spectral networks} of trajectories on Riemann surfaces obeying certain local rules aiming at the characterization of the possible spectra of BPS states and their allowed changes under continuous deformations of the theory. Given a compact Riemann surface $\mathcal R$ with punctures and a Lie algebra $\mathfrak g$ of ADE type, e.g., SU$(2)$ in our case, there exists a corresponding four-dimensional quantum field theory $S(\mathcal R, \mathfrak g)$, see \cite{Joyce, Witten}. The spectral network is defined by the critical trajectories of a quadratic differential $q$ given by
$q(z)dz^2$ in a local parameter $z$, which
defines a singular measured foliation of $\mathcal R$ with singularities at the zeros and poles of $q$. The differential is holomorphic on $\mathcal R$ and has possible poles at the punctures. The trajectories emerging from the zeros form the spectral network. For certain values of the zeros, there occur critical trajectories starting and ending at them, and 
we say that the network undergoes jumps and splits $\mathcal R$ into cells. Generic small variation of zeros changes the network by isotopy whereas the jumps occur
for certain values of them. Such critical trajectories we will call {\it short}. Counting the special trajectories is related to generalized Donaldson-Thomas invariants of the theory.

Short trajectories of $q$ turn to play an important role also in potential theory, approximation theory and other branches of mathematics. For example, short trajectories of rational quadratic differentials describe limiting distributions of certain types of orthogonal polynomials,  see e.g., \cite{Martinez2011, Martinez2014, ShapiroTakemura2011}. Motivated by applications to minimal surfaces, Bruce and O'Shea  published a preprint \cite{Bruce1995}, where the short trajectories characterized  umbilical points  and the geometry of unfolding.
Baryshnikov  \cite{Baryshnikov1997, Baryshnikov2001} described the combinatorial structure of the Stokes sets for polynomials in one variable  by bifurcation diagrams, 
and in particular, encoded the short trajectories of the differential $q$ in the simplest case when $\mathcal R=\mathbb C$ and $q$ is a versal deformation
of $z^ndz^2$. 
It was proved in \cite{Kostov1993} that the versal deformation of 
$z^n\,dz^2$ is 
the family $(z^n+a_{n-2} z^{n-2}+...+a_0)\,dz^2,$ where $a_k,$ $0\leq k\leq n-2,$ are complex parameters. It can be understood as a family which includes in a certain sense all  quadratic differentials of the form $p_n(z)\,dz^2,$ where $p_n(z)$ is a monic polynomial of degree $n$. The set of all parameters $(a_{n-2},...,a_0)$ in the parameter base space $\mathbb{C}^{n-2}$, for which the corresponding quadratic differential has a short trajectory, is the bifurcation diagram of the versal deformation, i.e., whenever a parameter $(a_{n-2},...,a_0)$ belongs to the bifurcation diagram, a small change of parameter 
causes a significant change in the trajectory structure. Using formal power series Bruce and O'Shea gave an explicit form of the bifurcation diagram for the case $n=2.$ 
They initiated the study of combinatorial structure of  bifurcation diagrams for arbitrary $n,$ which was completed by Baryshnikov \cite{Baryshnikov1997, Baryshnikov2001}, who gave combinatorial and  geometric descriptions of the set of polynomial quadratic differentials with  short trajectories. 
He also established correspondence between polynomial quadratic differentials and weighted graphs, and used 
the connection between weighted graphs and the Stasheff polyhedra.

The latter and physics motivation encouraged us to consider the case of quadratic differential with the presence of poles, in particular, the case of one simple
pole. The domains in the trajectory structure of the differential in our approach contain ending domains (or half-planes) and strip domains. More poles destroy completely the proposed picture because even two simple poles guarantee new types of domains, i.e., ring domains and dense structures.  We so far do not know what kind of graphs could parametrize them.
So our result in some sense extends Baryshnikov's approach up to the end.

Let us remark that  different graph encodings of quadratic differentials
were also used as a tool for solving a number of other problems.
For example, Bogatyr{\"e}v in \cite{Bogatyrev2003} used certain graphs based on quadratic differentials
in connection with the problem of description of extremal polynomials. Solynin \cite{Solynin2009} established the connection between weighted graphs and quadratic differentials with closed trajectories.

The outline of the paper is as follows.
In Section \ref{sbar},
we introduce 
the correspondence between weighted chord diagrams and the Stasheff polyhedra through the balanced weights following
\cite{Baryshnikov1997}. 
In Section \ref{sqd}, we discuss briefly the trajectory structure of rational quadratic differentials with a simple pole.
We establish one-to-one correspondence between weighted graphs and rational quadratic differentials with a simple pole in Section \ref{graphs}. Graphs and weighted chord diagrams identified with the quadratic differentials with short trajectories are described there.
The latter allows us to use the correspondence between weighted chord diagrams and the Stasheff polyhedra to obtain  an analogue of the bifurcation diagram for the case of rational quadratic differentials with a simple pole.
\vspace{10pt}

\noindent
{\bf Acknowledgement.}  The authors acknowledge many helpful discussions with prof. Boris Shapiro (Stockholm University) and the Mittag-Leffler Institute where this study started.

\end{section}

\begin{section}{Weighted chord diagrams and balanced weights }\label{sbar}

Following Baryshnikov \cite{Baryshnikov1997, Baryshnikov2001} we introduce weighted chord diagrams, Stasheff polyhedra, balanced weights, and describe  the correspondence between them.

A polytope $C$ in $\mathbb{R}^d$ is a convex hull of a certain number of points in $\mathbb{R}^d.$ 
If $C$ intersects a hyperplane $H$ and lies entirely in one of the half-spaces defined by $H$, we call $H\cap C$ a face of $C$. The vertices and edges of a polytope $C$ are $0-$ and $1-$dimentional faces of $C$ respectively. 
Any given vector $v\in\mathbb{R}^d$ determines a face $F_{v}(C)$ of $C$:
\[ F_{v}(C)=\{ x\in C : x\cdot v\geq y \cdot v\,\,\, \forall y\in C\}.
\]
$F_{v}(C)$ is an intersection of $C$ with a hyperplane
which  goes through the point $\mathrm{arg max}_{x\in C}\, x\cdot v$ and has $v$ as the normal vector. For $v=0$ we obtain the entire polytope $C$.
For any face $F$ of $C$ we define a normal cone $N_{F}(C)$ as
\[ N_{F}(C)=\{v\in\mathbb{R}^d : F=F_{v}(C)\}.
\]
Note that if the face $F$ has dimension $l$ and $l\leq d$, then the dimension of the normal cone $N_{F}(C)$ is $d-l$.
The collection of all normal cones of $C$ is called the normal fan of $C$.

Stasheff polyhedron (associahedron) $K_n$ is an $(n-2)-$dimensional polytope. Each vertex  of $K_n$ corresponds to a bracketing of a string of $n$ symbols, and each edge corresponds to a single application of associativity rule.
For example, $K_3$ consists of two vertices  represented by $(ab)c$ and $a(bc)$ and one edge connecting them.  Analogously, $K_4$ is a pentagon and $K_5$ is a polyhedron.

Alternatively, $K_n$ can be realized as a
polytope whose vertices represent triangulations of a regular $(n+1)-$gon and edges represent diagonal flips.
Triangulation of a polygon is a collection of non-intersecting diagonals; it is said to be incomplete if the number of diagonals is not maximal. The vertices of the polytope dual to $K_n$ correspond to  incomplete triangulations of the $(n+1)-$gon.

\begin{example}
The triangulation realization of $K_4$ is shown on figure \ref{k4}.
\begin{figure}
\includegraphics[scale=0.2]{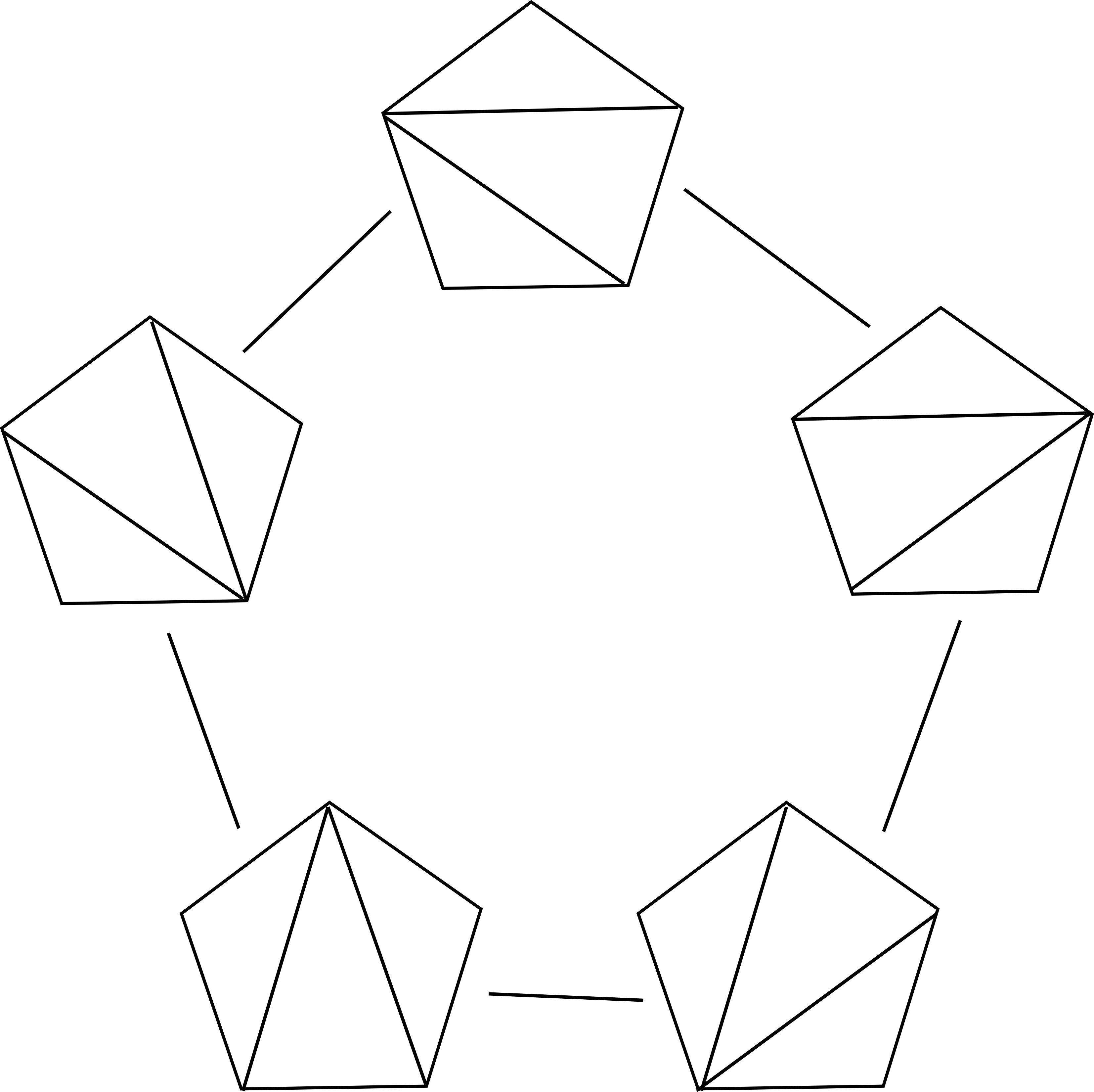}\label{k4}
\caption{The triangulation realization of $K_4.$}
\end{figure}
\end{example}

Normal fan $\Sigma_{n}$ to the Stasheff polytope $K_n$ is called the Stasheff fan.
The union of the cones of $\Sigma_{n}$ constitutes $\mathbb{R}^{n-2}.$ The number of full-dimensional cones is equal to the Catalan number $c_{n-1}=\frac{1}{n}  \binom {2n-2} {n-1}.$
 
\begin{example}
The Stasheff fan $\Sigma_4$ is illustrated on figure \ref{stash1}.

\begin{figure}\label{stash1}
\includegraphics[scale=0.2]{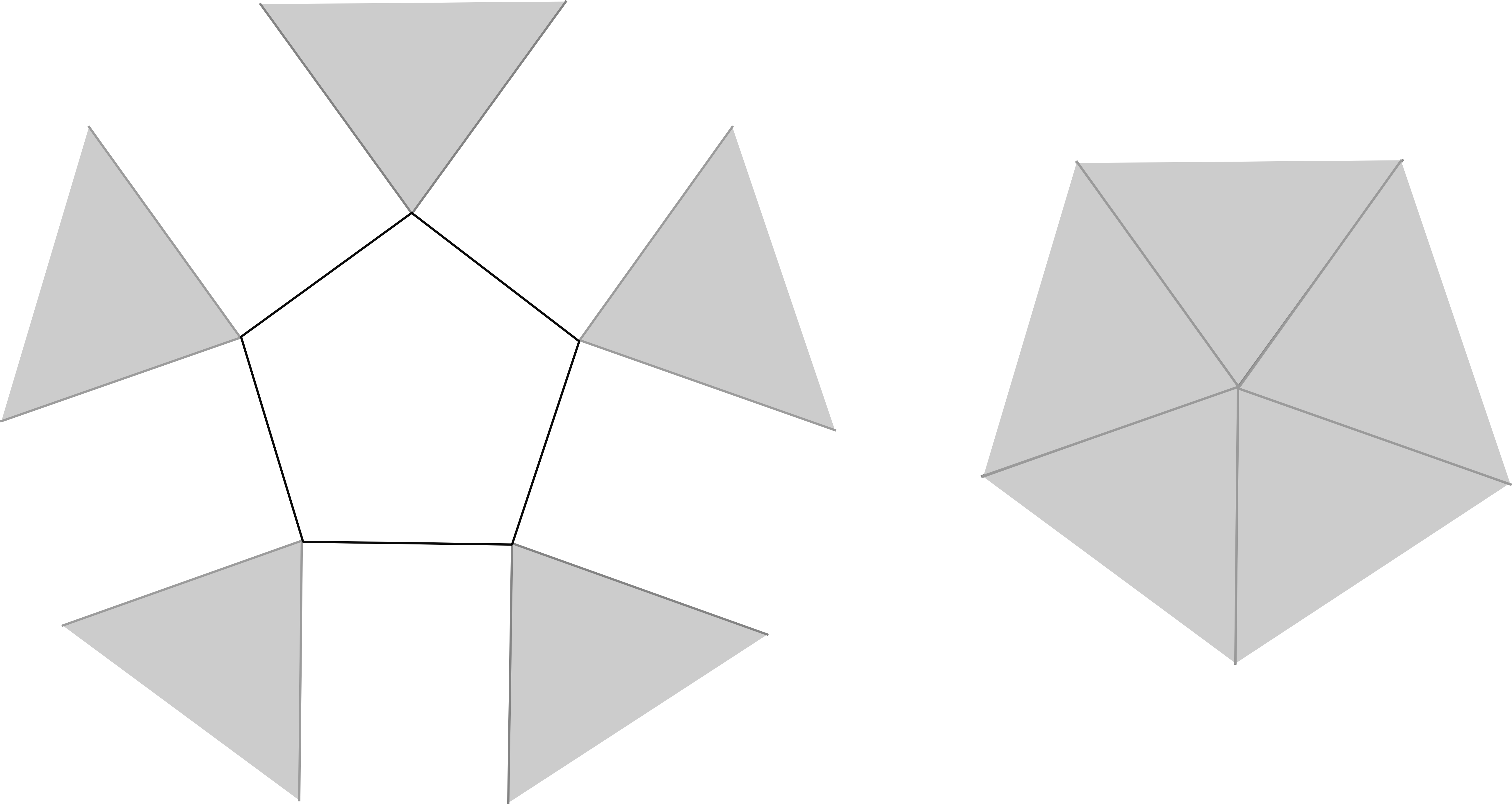}\caption{Normal fan $\Sigma_4$}
\end{figure}
\end{example}

Suppose we have a convex regular $(n+1)-$gon $P$.
	$P$ together with some weighted non-intersecting diagonals is called a weighted chord diagram. In this case we say that the weighted chord diagram is based on $P.$
	
	 A balanced weight is a function defined on the vertices of the $(n+1)-$gon $P$,
such that the sum of its values at the vertices is zero and the geometric center of masses is at the origin. A balanced weight $f$ is called degenerate if there exists a real linear function $L$ 
and vertices $a_{1,2,3,4},$ such that 
$L$ majorizes $f$ and coincides with it at the vertices $a_{1,2,3,4}.
$

Balanced weights form a linear space of real dimension $n-2$. According to  Baryshnikov, the degenerate balanced weights form a fan $\Sigma_{n}$, which is a normal fan for the Stasheff polytope $K_n$.

In what follows, we describe the correspondence between weighted chord diagrams and balanced weights.
\begin{lemma}
There is one-to-one correspondence between balanced weights and 
weighted chord diagrams.
\end{lemma}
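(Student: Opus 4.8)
The plan is to realize the correspondence geometrically through the upper convex hull of the lifted weight, which matches the notion of degeneracy already introduced. Place the regular $(n+1)$-gon $P$ with vertices $v_0,\dots,v_n$ centred at the origin of $\mathbb{R}^2$, and to a balanced weight $f$ associate the lifted points $(v_i,f(v_i))\in\mathbb{R}^3$. Let $\widehat f\colon P\to\mathbb{R}$ be the concave piecewise-linear function whose graph is the upper face of the convex hull of these points (the part seen from $+\infty$ along the vertical axis). Since every $v_i$ is a vertex of $P$, each lifted point is extreme, so $\widehat f$ interpolates the weight, $\widehat f(v_i)=f(v_i)$. The maximal domains of linearity of $\widehat f$ subdivide $P$; their interior edges are pairwise non-intersecting diagonals of $P$, and across each such diagonal $d$ the gradient of $\widehat f$ jumps, by a positive magnitude $w_d$ (the bending of $\widehat f$ along $d$). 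I would define $\Phi(f)$ to be the weighted chord diagram consisting of these diagonals carrying the weights $w_d$. A non-degenerate $f$ produces a triangulation of $P$, a degenerate one a coarser subdivision, and $f\equiv 0$ the empty diagram; this already reflects that the degenerate weights fill the walls of the Stasheff fan $\Sigma_n$, a non-triangular upper face being supported by exactly the affine function $L$ in the definition of degeneracy.

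To invert $\Phi$, I first attach to a single diagonal $d$ of $P$ an elementary generator $r_d$: the unique balanced weight whose upper hull is concave with one crease, along $d$, of unit bending. Concretely one restricts $\min(\ell_1,\ell_2)$ to $P$, where $\ell_1,\ell_2$ are affine with $\{\ell_1=\ell_2\}$ the line through $d$ and unit gradient jump, and then subtracts the affine part enforcing the two balancing constraints. Given a weighted chord diagram with non-intersecting diagonals $d_1,\dots,d_k$ and positive weights $w_1,\dots,w_k$, set $\Psi\big(\{(d_j,w_j)\}\big)=\sum_j w_j\, r_{d_j}$. The decisive point is that, because the $d_j$ do not intersect, $\sum_j w_j\, r_{d_j}$ is again concave and piecewise-linear with crease set exactly $\{d_1,\dots,d_k\}$ and bending $w_j$ across $d_j$: near the interior of any one diagonal every other elementary function is affine, so no crease is lost, reinforced, or spuriously created. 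Hence this sum is its own upper hull and $\Phi\circ\Psi=\mathrm{id}$.

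For the reverse composition, take a balanced weight $f$ and compare $\widehat f$ with $g:=\sum_d w_d\, r_d$, where $(d,w_d)=\Phi(f)$. Both are concave and piecewise-linear and share the same crease set and the same bending across every crease, so $\widehat f-g$ has vanishing gradient jump across each diagonal, is affine on each cell, and is continuous, hence affine on all of $P$. As $\widehat f$ and $g$ are both balanced, $\widehat f-g$ is an affine balanced weight; but the only such weight is $0$, because $\sum_i v_i=0$ and $\sum_i v_i v_i^{\top}$ is a positive multiple of the identity for a regular polygon, forcing the linear and constant parts to vanish. Thus $\widehat f=g$, so $\Psi\circ\Phi=\mathrm{id}$, and $\Phi$ is the desired bijection. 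I expect the main obstacle to be the linearity step of the second paragraph, namely verifying that superposing the single-crease generators over a non-crossing family reproduces precisely the multi-crease hull with the prescribed bendings and introduces no extra creases, since this is exactly where the non-intersecting hypothesis and the simpliciality of the cones of $\Sigma_n$ (equivalently, the simplicity of $K_n$) enter; the affine-rigidity observation that fixes the balancing gauge is the secondary point to nail down.
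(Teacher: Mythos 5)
Your proof is correct, and it is in fact more complete than the paper's, but it takes a genuinely different, polyhedral route. The paper defines the diagonal joining two non-adjacent vertices $a,c$ directly through the family of affine functions $L$ majorizing $f$ with $L(a)=f(a)$, $L(c)=f(c)$: the weight is the length of the interval swept by the values $L(p)$ at a fixed generic point $p$, non-crossing of the resulting diagonals is proved by a sign argument (two crossing diagonals would force the difference $\Lambda-L$ of the associated majorants to vanish identically), and the inverse direction is only asserted (``Analogously, for each weighted chord diagram there is a balanced weight corresponding to it''). You instead pass to the concave envelope $\widehat f$ (upper hull of the lifted vertices), read the diagram off as its crease set with gradient-jump weights --- so non-crossing comes for free, the creases being edges of a polyhedral subdivision of $P$ --- and, crucially, you build an explicit inverse by superposing the single-crease generators $r_d$, fixing the affine gauge via the rigidity observation $\sum_i v_i=0$, $\sum_i v_iv_i^{\top}\propto I$; the identities $\Phi\circ\Psi=\mathrm{id}$ and $\Psi\circ\Phi=\mathrm{id}$ then genuinely establish bijectivity, which the paper never does. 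The two constructions yield the same diagonals, since the upper hull is the pointwise infimum of the affine majorants of $f$, so a crease along $ac$ exists exactly when the paper's interval has positive length; the weights differ only by the factor $\mathrm{dist}(p,\,\mathrm{line}(ac))$, a harmless renormalization, so your statement is the paper's statement. Two points you should make explicit to close your argument: (i) the superposition step uses that for non-crossing diagonals each supporting line meets $P$ only in the diagonal itself (a chord of a convex set is the full intersection of its line with the set), which is what guarantees that every $r_{d_i}$ with $i\neq j$ is affine near the relative interior of $d_j$; and (ii) $\Phi\circ\Psi=\mathrm{id}$ also needs the remark that a concave piecewise-linear function on $P$ whose cells of linearity have all their vertices among the vertices of $P$ equals the upper hull of its own vertex values. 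Both are elementary convexity facts, and they are exactly where the non-crossing hypothesis enters, as you anticipated.
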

\begin{proof}
Let us show that each balanced weight gives rise to a weighted chord diagram.
We fix a point $p$ lying on the plane of the polygon $P$ and in general position with respect to $P$. Let $a$ and $c$ be two arbitrary non-adjacent vertices of $P$. We consider all the real linear functions $L$ on the plane of $P$, such that
$L(a)=f(a),$ $L(c)=f(c)$, and  $\left.L\right|_P(z)\ge f(z)$ for any vertex $z$ of $P$. The values of such linear functions at $p$ swipe out an interval of length $v$, $v\geq 0$. If $v> 0$, we construct a diagonal with weight $v$ joining $a$ and $c$. We go through this procedure for any pair of non-adjacent vertices
of $P$ we construct all possible diagonals. Construction of a chord is illustrated on figure \ref{chord}.

Note that the diagonals in the resulting diagram do not intersect, i.e., we obtain a weighted chord diagram. Suppose we have constructed two intersecting diagonals $a_1a_2$ and $b_1b_2$.
Then there exist linear functions $L$ and $\Lambda,$ satisfying the relations
\begin{equation}\label{lin}
\begin{array}{ll}
  \left.L\right|_{P}\geq f,\,\,  L(a_{1,2})=f(a_{1,2}),\\ \\
  \left.\Lambda\right|_{P}\geq f,\,\,\Lambda(b_{1,2})=f(b_{1,2}).
  \end{array}
\end{equation}

 The latter gives us that
 \[
 \begin{array}{ll}
  L(a_{1,2})=f(a_{1,2})\leq \Lambda(a_{1,2}),\\
  \Lambda(b_{1,2})=f(b_{1,2})\leq L(b_{1,2}).
  \end{array}
\]
 Thus, the function $d(z)=\Lambda(z)-L(z)$ satisfies the inequalities $d(a_{1,2})\geq 0$ and 
$d(b_{1,2})\leq 0.$ As $a_1$ and $a_2$ lie on different sides with respect to $b_1b_2$ and $d$ is real linear, we obtain that $d$ vanishes identically. Therefore, the diagonals $a_1a_2$ and $b_1b_2$ fail to exist and we arrive at a contradiction.
\begin{figure}
\includegraphics[scale=0.6]{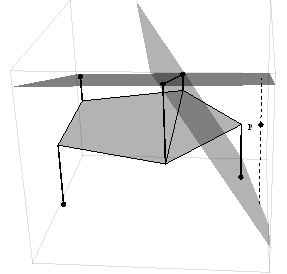}\caption{Construction of a chord.}\label{chord}
\end{figure}

Analogously, for each weighted chord diagram there is a balanced weight corresponding to it.
\end{proof}
Furthermore,  there is a one-to-one correspondence between the degenerate balanced weights and weighted
chord diagrams with incomplete triangulation. We separate the proof of this fact into three lemmas.

\begin{lemma}\label{oneside}
Suppose $f$ is a degenerate balanced weight and $P$ is the weighted
chord diagram corresponding to $f$. Then $P$ has an incomplete triangulation.
\end{lemma}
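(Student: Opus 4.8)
The plan is to show that the four vertices at which the degenerating linear function $L$ touches $f$ are forced to lie in a single non-triangular face of the chord diagram, so that the diagram's diagonals cannot triangulate the polygon. Throughout, let $P$ denote the underlying $(n+1)$-gon and let $a_1,a_2,a_3,a_4$ be the vertices where the linear $L$ with $L|_P\ge f$ satisfies $L(a_i)=f(a_i)$.

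First I would record that, since $a_1,\dots,a_4$ are vertices of the convex (indeed strictly convex, regular) polygon $P$, they are automatically in convex position; relabelling them in the cyclic order of $P$, the set $Q=\mathrm{conv}(a_1,a_2,a_3,a_4)$ is a genuine non-degenerate convex quadrilateral whose two crossing diagonals are $a_1a_3$ and $a_2a_4$. Next I would introduce the upper envelope $\hat f(z)=\inf\{\Lambda(z):\Lambda\text{ real linear},\ \Lambda|_P\ge f\}$, the smallest concave function dominating $f$. It is piecewise linear, and its domains of linearity are exactly the faces of the chord diagram of $f$: a diagonal $ac$ receives a positive weight precisely when $\hat f$ bends across $ac$ (this is the geometric meaning of the sweep-out length $v$ in the construction preceding the lemma, so $v>0$ exactly along the break lines of $\hat f$).

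The core computation is local to $Q$. Because $L$ is one of the competing linear functions, $\hat f\le L$ everywhere, while $L(a_i)=f(a_i)\le \hat f(a_i)\le L(a_i)$ gives $\hat f(a_i)=L(a_i)$ at the four corners. Any point $x\in Q$ is a convex combination $x=\sum_i\lambda_i a_i$, so concavity yields $\hat f(x)\ge\sum_i\lambda_i\hat f(a_i)=\sum_i\lambda_i L(a_i)=L(x)$, the last equality by affineness of $L$; combined with $\hat f\le L$ this forces $\hat f\equiv L$ on all of $Q$. Hence $\hat f$ is affine on the whole quadrilateral, so $Q$ lies inside one domain of linearity, i.e.\ inside a single face of the chord diagram. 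Since each $a_i$ is a corner of $P$ and the face is contained in $P$, each $a_i$ is also a vertex of that face; the face therefore has at least four vertices and is not a triangle, and in particular neither crossing diagonal $a_1a_3$ nor $a_2a_4$ is drawn. Consequently the diagonals of the diagram leave a non-triangular region, and the triangulation is incomplete.

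I expect the main obstacle to be the identification step rather than the envelope computation: one must argue carefully that the chord-diagram diagonals coincide with the break lines of $\hat f$, equivalently that the sweep-out length $v$ attached to a pair of non-adjacent vertices is strictly positive exactly when $\hat f$ is non-smooth across that chord, and one must confirm that a face of the subdivision containing $Q$ genuinely carries the $a_i$ as vertices (this uses that each $a_i$ is an extreme point of $P$). By contrast, the convex-position remark and the concavity argument on $Q$ are routine, so the real care lies in relating the explicit geometric construction of the chords to the envelope $\hat f$.
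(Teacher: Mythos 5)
Your route is genuinely different from the paper's --- you go through the concave upper envelope $\hat f$ and the regular subdivision it induces, whereas the paper never introduces $\hat f$ --- and the envelope computation itself (concavity at the four contact points plus $\hat f\le L$ forces $\hat f\equiv L$ on $Q$) is correct. But as written there is one genuine gap, and you have pointed at it yourself: the identification of the positively weighted chords with the break lines of $\hat f$ is asserted, not proved, and your conclusion uses it essentially (once to say that the linearity domain containing $Q$ is a face of the chord diagram, and once to infer that no drawn diagonal enters the interior of $Q$). The gap is fillable, and in fact you need only one direction of the equivalence: \emph{every chord of positive weight lies in the bending locus of $\hat f$}. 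Proof: a positive weight on $b_1b_2$ means there exist two distinct linear majorants $L_1\neq L_2$ of $f$ with $L_i(b_{1,2})=f(b_{1,2})$; they agree on the whole segment $b_1b_2$ (an affine function on a segment is determined by its endpoint values), and concavity of $\hat f$ together with $\hat f\le\min(L_1,L_2)$ and $\hat f(b_{1,2})=f(b_{1,2})$ pins $\hat f$ to this common value along the segment; if $\hat f$ coincided with an affine $\Lambda_0$ near an interior point of the segment, then $L_i-\Lambda_0$ would be affine, nonnegative near that point and zero along the segment, hence identically zero for $i=1,2$, forcing $L_1=L_2$, a contradiction. With this lemma, your argument closes: the interior of $Q$ is bend-free, so no positive-weight chord crosses it, the region of the diagram containing $Q$ is convex and has $a_{1,2,3,4}$ among its vertices (an extreme point of $P$ stays extreme in any convex subset containing it), so it is not a triangle and the triangulation is incomplete.

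For comparison, the paper's proof is shorter and self-contained: it assumes a diagonal $b_1b_2$ crosses the interior of $Q$, takes a certifying majorant $\Lambda$, and shows the affine difference $d=\Lambda-L$ must vanish identically from its signs ($d\le 0$ at $b_{1,2}$, $d\ge 0$ at the $a_k$), so the weight of $b_1b_2$ would be zero. Note that the mechanism filling your gap is exactly this same two-majorant sign comparison, so the envelope formalism repackages the key step rather than replacing it. What your version buys is a stronger structural statement --- the whole quadrilateral $Q$ sits inside a single face of the diagram --- and it makes explicit the correspondence between the chord diagram and the polyhedral subdivision of $P$ induced by $f$, which is the picture underlying the whole section; the cost is the extra identification lemma, which the paper's more economical argument avoids.
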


\begin{proof}

Suppose we are given a degenerate weight $f,$ i.e., there exists a linear function $L$ and vertices $a_k\in P,$ $1\leq k \leq 4,$ 
such that $\left.L\right|_P\geq f,$ $L(a_k)=f(a_k), $ $1\leq k\leq 4.$ The chord diagram corresponding to the weight $f$ can not have
a diagonal that intersects the interior of the quadrilateral $Q$ formed by $a_k,$ $1\leq k \leq 4,$ and thus, triangulation of $P$ is not complete. 
To show this we assume that $P$ has non-adjacent vertices $b_1$ and $b_2,$ such that the diagonal $b_1b_2$ intersects the interior of $Q.$ 
As the diagonal $b_1b_2$ exists, there must be  a linear function $\Lambda,$ such that
$\left.\Lambda\right|_P\geq f,$ $\Lambda(b_{1,2})=f(b_{1,2}). $ 
As $L$ majorizes $f,$ we have that $\Lambda(b_{1,2})=f(b_{1,2})\leq L(b_{1,2}).$ Thus, the real linear function $d$ defined by $d(z)=\Lambda(z)-L(z)$  satisfies 
 $d(b_{1,2})\leq 0.$ 
Since $b_1b_2$ intersects the interior of $Q,$  there are two vertices $a_k$ and $a_j$, $1\leq k< j\leq 4,$ which are separated by the line containing the diagonal $b_1b_2$.
Since $\Lambda$ majorizes $f$, we obtain that 
$\Lambda(a_{k,j})\geq f(a_{k,j})= L(a_{k,j})$ and 
 $d(a_{k,j})\geq 0.$ Such a behaviour of the sign of a linear function $d$ is possible if and only if $d\equiv 0.$ Therefore, there may be only one linear function majorizing $f$ and coinciding with it at $b_{1,2},$ which contradicts the existence of the diagonal $b_1b_2.$
  
\end{proof}

The converse to  Lemma \ref{oneside} is also true, and we need the following lemma to prove this.

\begin{lemma}\label{sides}
Let a weighted chord diagram have a chord $a_1a_2$ with some positive weight. Let $f$ be the balanced weight corresponding to the diagram.
Then there are two distinct vertices $a_3$ and $a_4,$ which lie on different sides with respect to $a_1a_2,$ such that there exist distinct linear functions $L$ and $\Lambda$ majorizing $f$ and satisfying the relations 
\begin{equation}\label{maj0}
 \begin{array}{ll}
  \left.L\right|_{P}\geq f,\,\,  \left.L\right|_{P}(a_{1,2,3})=f(a_{1,2,3}),\\ \\
  \left.\Lambda\right|_{P}\geq f,\,\, \left.\Lambda\right|_{P}(a_{1,2,4})=f(a_{1,2,4}).
  \end{array}
\end{equation}
\end{lemma}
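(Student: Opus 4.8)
The plan is to reduce the statement to a one-parameter analysis of the linear functions interpolating $f$ at the endpoints of the chord. Fix the line $\ell$ through $a_1$ and $a_2$, and let $s$ be the affine function on the plane of $P$ that vanishes on $\ell$, normalized to be positive on one of the two open half-planes bounded by $\ell$ and negative on the other. Every linear function $L$ with $L(a_1)=f(a_1)$ and $L(a_2)=f(a_2)$ can then be written uniquely as $L_t=L_0+t\,s$, where $L_0$ is one fixed such function and $t\in\mathbb{R}$ is the tilt about $\ell$. This turns the whole question into a question about the scalar parameter $t$.

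Next I would translate the majorization constraints into inequalities on $t$. Writing $g(z)=f(z)-L_0(z)$, the condition $L_t(z)\ge f(z)$ becomes $t\,s(z)\ge g(z)$. For a vertex $z$ with $s(z)>0$ this is a lower bound $t\ge g(z)/s(z)$, while for $s(z)<0$ it is an upper bound $t\le g(z)/s(z)$; the endpoints $a_1,a_2$, where $s=0$ and $g=0$, impose nothing. Because $a_1a_2$ is a diagonal of a convex polygon, both open half-planes contain vertices of $P$, so both families of inequalities are non-empty and the feasible set for $t$ is a closed interval $[t_-,t_+]$ with $t_-=\max_{s(z)>0} g(z)/s(z)$ and $t_+=\min_{s(z)<0} g(z)/s(z)$, each extremum attained at some vertex.

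I would then invoke the hypothesis that the chord $a_1a_2$ carries positive weight. By the construction of the chord diagram this weight equals the length of the interval swept by $L_t(p)$, namely $|s(p)|\,(t_+-t_-)$; since $p$ is in general position it lies off $\ell$, so $s(p)\neq 0$, and positivity of the weight forces $t_-<t_+$. Hence the maximizing and minimizing vertices are honest binding constraints at two distinct endpoints: there is a vertex $a_3$ with $s(a_3)>0$ realizing $t_-$, i.e. $L_{t_-}(a_3)=f(a_3)$, and a vertex $a_4$ with $s(a_4)<0$ realizing $t_+$, i.e. $L_{t_+}(a_4)=f(a_4)$. Because $t_-<t_+$, at $t=t_-$ every upper-bound constraint is strict, so the active vertex $a_3$ genuinely lies on the positive side; symmetrically $a_4$ lies on the negative side. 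Setting $L:=L_{t_-}$ and $\Lambda:=L_{t_+}$ yields two linear functions majorizing $f$, coinciding with it at $a_1,a_2,a_3$ and at $a_1,a_2,a_4$ respectively, distinct because $t_-\neq t_+$, with $a_3$ and $a_4$ on opposite sides of $\ell$, which is exactly the assertion.

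The argument is essentially self-contained once the tilt parametrization is in place, and I do not expect a serious obstacle. The one point requiring care is the claim that the extremal tilts $t_\pm$ are attained at vertices lying strictly on opposite sides of $\ell$: this is precisely where convexity of $P$ (guaranteeing vertices on both sides of the diagonal) and the strict inequality $t_-<t_+$ (coming from the positive weight) are both needed, since without them one could not exclude the degenerate case in which the binding vertices collapse onto $\ell$ or onto a single side.
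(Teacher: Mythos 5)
Your proof is correct, but it is organized quite differently from the paper's. The paper splits the work in two: it first \emph{asserts} the existence of distinct vertices $a_3,a_4$ and distinct majorizing functions $L,\Lambda$ satisfying \eqref{maj0} ("because otherwise the chord does not have a positive weight"), and then proves the opposite-sides property by contradiction: if $a_3$ and $a_4$ lay on one side, the linear function $d=\Lambda-L$ would vanish at $a_{1,2}$ and satisfy $d(a_3)\geq 0$, $d(a_4)\leq 0$, forcing $d\equiv 0$ and hence $L=\Lambda$, contradicting the positive weight. Your tilt parametrization $L_t=L_0+t\,s$ makes the same underlying fact explicit --- any two linear functions agreeing at $a_1,a_2$ differ by a multiple of the signed distance $s$ to the chord line --- but uses it constructively rather than for a contradiction: the feasible set of tilts is the interval $[t_-,t_+]$, positive weight gives $t_-<t_+$, and the binding vertices at the two endpoints automatically lie on opposite sides because lower-bound constraints come from $s>0$ and upper-bound constraints from $s<0$. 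What your route buys is precisely the step the paper glosses over: the attainment of the extremal tilts at vertices is the honest proof that the touching vertices $a_3,a_4$ and the distinct functions $L=L_{t_-}$, $\Lambda=L_{t_+}$ exist at all, and it also identifies $\Lambda-L$ as the positive multiple $(t_+-t_-)s$ of $s$, from which the sign dichotomy is immediate. The paper's version is shorter; yours is self-contained. One small point of care (which your hypothesis covers, and which is vacuous for a regular polygon since no three of its vertices are collinear): vertices lying exactly on the chord line contribute $t$-independent constraints, and these are satisfied precisely because the chord is assumed to exist with positive weight, i.e.\ the feasible set is nonempty.
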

\begin{proof}
Indeed, there must exist distinct vertices $a_3,$ $a_4$ and 
distinct linear functions $L,$ $\Lambda$ satisfying relations (\ref{maj0}), because otherwise the chord does not have a positive weight. 
Let us assume now that $a_3$ and $a_4$ lie on one side with respect to  the line segment $a_1a_2.$ The relations (\ref{maj0}) imply that 
\[\begin{array}{ll}
  L(a_3)=f(a_3)\leq \Lambda(a_3),\\ \\
 \Lambda(a_4)=f(a_4)\leq L(a_4),
 \end{array}\]
and thus, the linear function $d(z)=\Lambda(z)-L(z)$ satisfies the inequalities
$d(a_3)\geq 0$ and $d(a_4)\leq 0.$ In addition we have that 
$d(a_{1,2})=0.$ As the points $a_3$ and $a_4$ lie on one side with respect to the chord $a_1a_2,$ the linear function $d$ has to vanish identically, which contradicts the existence of the chord $a_1a_2.$
\end{proof}

\begin{lemma}
A balanced weight corresponding to a weighted chord diagram with an incomplete triangulation is degenerate.
\end{lemma}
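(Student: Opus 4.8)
The plan is to interpret majorizing linear functions as planes lying above the lifted vertices $(z,f(z))\in\mathbb R^3$, and to read the incompleteness of the triangulation as the presence of a non-triangular cell whose flat roof already supplies the four coinciding vertices required for degeneracy. Concretely, suppose the weighted chord diagram based on $P$ has an incomplete triangulation. The non-intersecting chords together with the sides of $P$ then cut $P$ into cells, and incompleteness forces at least one cell $Q$ to have vertices $b_1,\dots,b_m$ with $m\ge 4$. It will suffice to produce a single real linear function $L$ with $\left.L\right|_P\ge f$ and $L(b_j)=f(b_j)$ for four of the $b_j$, since this is exactly the definition of a degenerate weight.

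The central step is to show that the supporting plane of the cell $Q$ meets $f$ at \emph{all} of $b_1,\dots,b_m$. I would pick an edge of $Q$ that is a chord, say $c_1c_2$; it carries positive weight, so Lemma \ref{sides} provides a majorizing $L$ with $L(c_1)=f(c_1)$, $L(c_2)=f(c_2)$, and $L(c_3)=f(c_3)$ for some vertex $c_3$ lying on the side of $c_1c_2$ that contains $Q$. Since $c_3$ is off the line $c_1c_2$, the plane of $L$ is determined and supports the lifted configuration along a whole two-dimensional face lying over the cell adjacent to $c_1c_2$ on the $Q$-side, that is, over $Q$ itself. I would make this rigorous by arguing that if $L$ touched $f$ at only the three vertices $c_1,c_2,c_3$, then one of the segments $c_1c_3$, $c_2c_3$ would be an interior edge of the subdivision carried by $L$ and hence a chord of positive weight lying inside $Q$, contradicting that $Q$ is a single cell with no diagonal; therefore $L$ must coincide with $f$ at every vertex of $Q$.

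Granting this, any four vertices of $Q$ together with $L$ satisfy the definition of degeneracy, so $f$ is degenerate; combined with Lemma \ref{oneside}, this completes the one-to-one correspondence between degenerate balanced weights and weighted chord diagrams with incomplete triangulation. The main obstacle I anticipate is precisely this flat-roof step, equivalently the statement that the cells of the chord diagram coincide with the faces of the subdivision induced by $f$, so that a supporting function built along one boundary chord cannot remain strictly above a vertex of the same cell. Finally, the degenerate situation in which the diagram has no chords at all, so that $Q=P$, is handled by the same lifting remark: the absence of any positive-weight chord means the upper boundary of the lifted vertices has no interior edge, hence is a single flat face, and its plane gives an $L$ coinciding with $f$ at all $n+1\ge 4$ vertices.
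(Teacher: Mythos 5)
Your strategy is genuinely different from the paper's. The paper argues iteratively: starting from a positive\mbox{-}weight chord it repeatedly builds majorizing planes through argmax vertices, at each step either detecting four coincidence points (degeneracy) or producing a new positive-weight chord, so the process must end in either degeneracy or a complete triangulation; the chordless case is handled separately via the two largest values of $f$. You instead localize at a single non-triangular cell $Q$ and try to show that one supporting plane is flat over all of $Q$. This idea is sound (it is the upper-hull/regular-subdivision picture), and you correctly isolate the ``flat-roof'' step as the crux; but your execution of that step has a genuine hole. Lemma \ref{sides} only guarantees that the third coincidence vertex $c_3$ lies on the $Q$-side of the chord $c_1c_2$; it does \emph{not} guarantee that $c_3$ is a vertex of $Q$. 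If $c_3$ lies beyond one of the other boundary chords of $Q$, then neither $c_1c_3$ nor $c_2c_3$ lies inside $Q$, and your stated contradiction (``a chord of positive weight lying inside $Q$, contradicting that $Q$ is a single cell'') never arises. That case can be repaired: one shows that a segment from $c_1$ or $c_2$ to such an exterior $c_3$ must cross a boundary chord of $Q$ at interior points (using convexity of the cells and the fact that no three vertices of a regular polygon are collinear), which contradicts the non-crossing of chords established in the paper's first lemma rather than the cell structure of $Q$. But this case is absent from your argument, and it is exactly where the work lies.

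There is a second, smaller gap in the step ``hence a chord of positive weight.'' By the paper's definition, the weight of a pair is the length of the interval of values at the generic point $p$ swept by \emph{all} majorizing linear functions through that pair, so positive weight requires \emph{two distinct} such functions; exhibiting the single function $L$ is not enough. You need to produce a second one, e.g.\ by rotating $L$ slightly about the line through the lifted points of $c_1$ and $c_3$ toward the side containing $c_2$: this preserves majorization precisely because, under your contradiction hypothesis, $L$ is strictly above $f$ at every vertex other than $c_1,c_2,c_3$. The same unproven implication (``no positive-weight chord $\Rightarrow$ the lifted upper boundary has no interior edge, hence is a single flat face'') is what your treatment of the chordless case rests on. Both points are fixable by this rotation argument, but as written they are assertions rather than proofs. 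Note finally that once they are fixed you do not need the full claim that $L$ coincides with $f$ at \emph{every} vertex of $Q$: ruling out ``exactly three coincidence points'' already forces a fourth, which is degeneracy by definition.
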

\begin{proof}
Let $f$ be a balanced weight which corresponds to the  weighted chord diagram~$P.$
If $P$ has no diagonals, there must be a linear function whose restriction to $P$ is identically equal to $f,$ and thus $f$ is degenerate. Assume the contrary.
We denote by $a_1$ and $a_2$ the vertices of $P$ such that the values $f(a_1)$ and $f(a_2)$ are the biggest and second biggest values of $f.$  Let $a_3$ and $a_4$ be the argument of the maximum of $f$ among the vertices to the left and to the right from the line segment $a_1a_2.$ Linear functions $L_1$ and $L_2$, which coincide with $f$ at $a_{1,2,3}$ and $a_{1,2,4}$ respectively, majorize $f.$ By assumption, $L_1$ and $L_2$ are not identically equal, and thus, the chord $a_1a_2$ must have a positive weight, and we arrive at a contradiction.

Assume now that the chord diagram has a chord $a_1a_2$ with a positive weight. Then there are a vertex $a_3$ and a linear function $L,$ such that $\left.L\right|_{P}\geq f,$  $\left.L\right|_{P}(a_{1,2,3})=f(a_{1,2,3}).$ The line segment $a_1a_3$ divides $P$ into two parts. Denote by $b_1,\dots,b_m$ the vertices of the part of $P$ which does not contain $a_2$. We define $b=\mathrm{arg max}_{1\leq k\leq m} f(b_k)$ and construct the linear function $L_1$ which coincides with $f$ at vertices $a_1,$ $a_3$ and $b.$ The function $L_1$ majorizes $f.$ If $L_1\equiv L,$ $f$ is degenerate. If $L_1\not\equiv L,$ the chord  $a_1a_3$ has a positive weight. Using Lemma~\ref{sides} we continue this construction of the chords until we either discover degeneracy of $f$ or obtain a complete triangulation of $P.$

\end{proof}
	We summarize the results stated previously in the following theorem.
	\begin{theorem}\label{thm1}
	The balanced weights defined on an $(n+1)-$gon $P$ constitute a vector space isomorphic to $\mathbb{R}^{n-2}.$ The degenerate balanced weights form the Stasheff fan $\Sigma_n.$ There is a one-to-one correspondence between the degenerate balanced weights and the weighted chord diagrams based on $P$ with an incomplete triangulation.
	\end{theorem}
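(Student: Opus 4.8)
The plan is to prove the three assertions in turn, noting that the substantive work has already been done in the preceding lemmas, so that the theorem is essentially an assembly of them together with one elementary dimension count. For the first assertion I would encode a weight $f$ as the vector $(f(v_1),\dots,f(v_{n+1}))\in\mathbb{R}^{n+1}$, where $v_1,\dots,v_{n+1}\in\mathbb{R}^2$ are the vertices of $P$. The two balancing conditions — vanishing total weight and weighted barycenter at the origin — then read $\sum_k f(v_k)=0$ and $\sum_k f(v_k)v_k=0$, three scalar equations defining the kernel of the linear map $\mathbb{R}^{n+1}\to\mathbb{R}^3$ with rows $(1,v_k)$. Because $P$ is a genuine convex polygon its vertices are not all collinear, so some three of the points $v_k$ are affinely independent; the corresponding rows $(1,v_k)$ are then linearly independent, the map has rank $3$, and the space of balanced weights has dimension $(n+1)-3=n-2$.

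For the second assertion I would simply invoke the result of Baryshnikov recalled above: inside this $(n-2)$-dimensional space the degenerate weights assemble into the normal fan $\Sigma_n$ of the associahedron $K_n$. A non-degenerate weight lies in the interior of a maximal cone and selects a single complete triangulation, whereas the degenerate weights fill exactly the walls separating these cones, and the combinatorics of that stratification is by definition $\Sigma_n$. The third assertion is then obtained by transporting the characterization of degeneracy across the first lemma of the section, which already furnishes a bijection between all balanced weights and all weighted chord diagrams on $P$. Lemma \ref{oneside} shows that a degenerate weight corresponds to a diagram with incomplete triangulation, while the concluding lemma supplies the converse; restricting the bijection accordingly yields the stated one-to-one correspondence.

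I expect the only genuine computation to be the rank statement in the first assertion, and even that reduces to the observation that the vertices of a non-degenerate polygon fail to be collinear. The one point requiring care is keeping the two implications of the third assertion distinct — degeneracy implying incompleteness and incompleteness implying degeneracy — which is exactly why the section isolates Lemma \ref{oneside} and its converse rather than attempting a single combined equivalence.
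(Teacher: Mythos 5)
Your proposal matches the paper's treatment almost exactly: the paper likewise presents Theorem \ref{thm1} as a summary, assembling the bijection of the first lemma with Lemma \ref{oneside} and its converse for the third assertion, quoting Baryshnikov for the fan structure, and simply asserting the dimension $n-2$ for the space of balanced weights. The only difference is that you spell out the elementary rank-$3$ kernel computation that the paper states without proof, which is a welcome (and correct) addition rather than a different route.
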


	\end{section}

\begin{section}{Quadratic differentials}\label{sqd}

A meromorphic quadratic differential $q$ on a Riemann sphere $S$ is a meromorphic section of the symmetric square of the complexified cotangent bundle over $S$.
It is represented as $q(z)dz^2$ in the local parameter $z$ by a meromorphic 
 function $q(z)$ on $S$ together with the following transition rule 
\[
q^*(\zeta)=q(z(\zeta))\left(\frac{dz}{d\zeta}\right)^2,
\]
in the common neigbourhood of the parameters $z$ and $\zeta$,
where $q^*$is the same quadratic differential in terms of the local parameter $\zeta$.

A $\it{horizontal}$ (respectively, $\it{vertical}$) trajectory of quadratic differential is a maximal curve along which the inequality $q(z)\,dz^2> 0$, (respectively, $ q(z)\,dz^2< 0 $) holds. If the endpoint of a trajectory is a zero or a simple pole of $q$, such trajectory is called $\it{critical}$.

The zeros and poles of a quadratic differential are the $\it{critical}$ points. All non-critical points are $\it{regular}$. In a neighborhood of a regular point horizontal and vertical trajectories are just straight horizontal and respectively vertical lines. The trajectory structure about the critical points is  well-known, see e.g.,  \cite{Jenkins1958, Strebel1984,  Vasiliev2002}. Description of the global structure of a quadratic differentials is much more difficult.

We are interested in the following family of quadratic differentials:
		\begin{equation}\label{fam}
		\frac{z^k+a_1 z^{k-1}+\dots+a_0 }{z}\,dz^2
		\end{equation}
		where $k\ge2$, $z\in {\mathbb{C}}$, $a_j\in\mathbb{C},$ $0\leq j\leq k-2.$

Let  $q(z)dz^2$ be a member of the family (\ref{fam}). 
It has
 $k$ zeros (counting multiplicity), a simple pole at the origin and 
a pole $p$ of order $m=k+3$ at infinity. Observe that unlike the versal deformation of a polynomial quadratic differential the coefficient $a_1$ in \eqref{fam} is not 
necesserely vanishing, because the simple pole at the origin prevents to perform affine coordinate change.

Let us have a look at trajectory structure of $q(z)dz^2$ about infinity. If the infinite pole $p$ is of order $m$, then it is possible to find a neighbourhood $U$ of $p$, such that any trajectory ray entering $U$ stays in $U$. In this neighbourhood one can define $m-2$ so-called $\it{principal}$ $\it{directions}$, such that
 the directions divide $U$ into $m-2$ sectors of angles $\frac{2 \pi}{m-2}$; and
 any trajectory ray that enters $U$ tends to $p$ in one of the directions.

\begin{example}
Quadratic differential of form $\frac{z^2-1}{z}\, dz^2$ has three principal directions at infinity. Figure \ref{pole5} illustrates the trajectory structure about infinity in this case.

\begin{figure}[h!]
 \centering
    \includegraphics[width=0.3\textwidth]{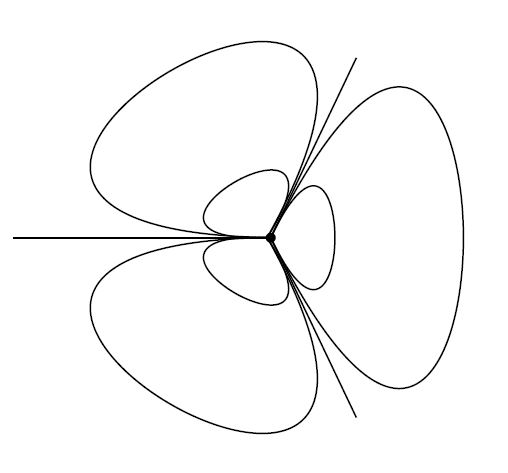}
 \caption{Trajectory structure near a pole of order 5}\label{pole5}
\end{figure}

\end{example}
We denote by $\Phi_q$ the union of all critical trajectories of $q(z)\,dz^2$. Then $S\setminus \bar{\Phi}$ splits  $S$ into $\it{strip}$ and $\it{ending}$ domains. Strip and ending domains in the trajectory structure of $q(z)\,dz^2$ are simply connected domains, which can be mapped conformally by $w=\int \sqrt{q(z)}\, dz$ onto a strip $\{a<\Im w< b\}$ and a halfplane respectively. These domains are swept 
out by the trajectories starting and ending at infinity; the critical points of $q(z)\,dz^2$ belong to their boundaries.

\end{section}

\section{Graph representation of quadratic differentials}\label{graphs}
In this section we establish the one-to-one correspondence between weighted graphs and quadratic differentials of the form (\ref{fam}).
\subsection{Assigning admissible graphs to quadratic differentials}
We describe an algorithm of assigning a pair of graphs to a quadratic differential. 

Suppose we are given a quadratic differential $q(z)\,dz^2$ from the family (\ref{fam}). It has $k$ zeros, a simple pole at the origin and a pole of order $k+3$ at infinity. Thus, the horizontal and vertical trajectory structures of $q(z)\,dz^2$ about the infinite pole have $k+1$ principal directions each. 

We construct graphs $G_h$ and $G_v$ which represent the horizontal and  vertical trajectory structure of $q(z)\,dz^2$ respectively.

The graph $G_h$ (respectively, $G_v$) contains $k+1$ vertices and edges, which form a regular convex $(k+1)-$gon, denoted by $\Pi_h$ (respectively, $\Pi_v$).
In addition, each graph has a vertex $O$ placed in the center if the interior of 
the $(k+1)-$gon.
 The vertices of 
$\Pi_h$ (respectively, $\Pi_v$)
represent the principal directions in which the critical trajectories tend to the infinite pole. The vertex $O$ represents the finite pole.

The quadratic differential $q(z)\,dz^2$ is characterized uniquely by the domains of the trajectory structure, i.e., strip and half-plane domains.
The half-plane domains are represented by the edges of
$\Pi_h$ (respectively, $\Pi_v$).
In order to mark the strip domains of horizontal (respectively, vertical)
trajectory structure of  $q(z)\,dz^2$ on the $G_h$ (respectively $G_v$) we construct additional edges. 
 Suppose we have a strip domain $S$, which is to be marked on the graph $G_h$ or $G_v$.
 As any strip domain it is swept out by trajectories whose ends approach infinity in certain principal directions. Suppose these two principal directions are represented by the vertices $a$ and
 $b$ of $\Pi_h$ or $\Pi_v$. Observe that $a$ and $b$ may coincide.
 If the strip domain $S$ does not have the finite pole on its boundary,  
 we mark it with an edge joining the vertices $a$ and $b$. 
If $S$ has the finite pole on its boundary, we mark it with
 an edge joining the pole vertex $O$ with $a$ and an edge joining  $O$ with $b$. 
 Finally to each edge representing $S$ we assign a weight $w_S$ which is equal the the width of $S$ in the metric associated with the quadratic differential $q(z)\,dz^2$.

This way we mark all the strip domains of the horizontal (respectively vertical) trajectory structure of $q(z)\,dz^2$ on $G_h$ (respectively $G_v$) so that the edges of $G_h$ (respectively $G_v$) intersect only at vertices.

\begin{example}\label{exx}
The quadratic differential $\frac{z^3-1}{z}\,dz^2$ has 3 simple zeros, a simple pole at the origin, and the infinite pole of order 6. The horizontal and vertical trajectory structures have 4 principal directions at infinity  each. Figure~\ref{gr} illustrates the graphs $G_h$ and $G_v$.
\begin{figure}
\includegraphics[scale=0.2]{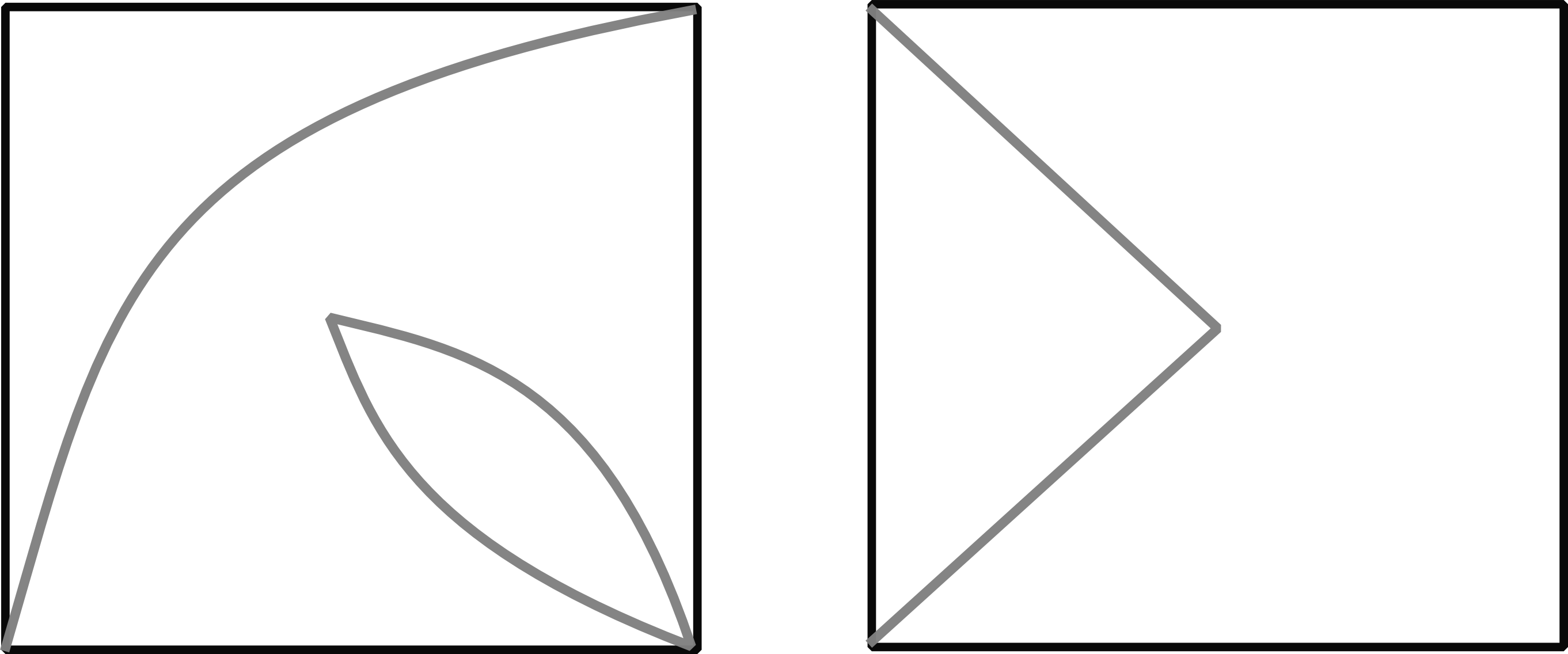}\caption{Graphs $G_h$ and $G_v$ for $\frac{z^3-1}{z}\,dz^2$. }\label{gr}
\end{figure}
\end{example}

\subsection{Admissible graphs}
Let us describe the graphs which may represent a quadratic differential.
We call a graph $\it{admissible}$ if we can associate a horizontal or vertical trajectory structure of a
quadratic differential to it. 
An admissible graph $\Gamma$ contains $n$, $n\geq 3,$ vertices and edges which constitute a regular convex polygon $\Pi_{n}$. In addition, 
$\Gamma$ has vertex $O$ at the center of the interior of $\Pi_{n}$.
There are two edges connecting the vertex $O$ with some adjacent vertices $a$ and $b$ of $\Pi_n$. The vertices $a$ and $b$ may coincide. and in this case, we treat them as two adjacent vertices with one and the same support.
The edges of an admissible graph intersect only at vertices.

\subsection{Assigning a quadratic differential to a pair of admissible graphs }
 
Here we describe an algorithm of assigning the trajectory structure of a quadratic differential to a pair $(G_h, G_v)$ of admissible graphs with $k+2$ vertices.
Let us start with merging $G_h$ and $G_v$ into one and the same graph $G$, assigning to $G_h$ and $G_v$ different colours. 
Then, we place $G_h$ over $G_v$ in such a way, that the vertex $O$  of 
$G_h$ is right above the  vertex $O$ of $G_v$, and the vertices of the polygons
 $\Pi_h$ and $\Pi_v$ are interlacing. Furthermore, we erase the edges forming the polygons $\Pi_h$ and $\Pi_v$, and join the $2(k+1)$ interlacing vertices with edges, so that a regular convex $2(k+1)-$gon $\Pi$ is formed. Finally, we merge what is left of $G_h$ and $G_v$ with the 
$2(k+1)-$gon $\Pi$ into a new graph $G$. Note, that the edges of $G$ may intersect not only at vertices.

\begin{example}\label{exxx}
The graphs $G_h$ and $G_v$ from example \ref{exx} are admissible. The corresponding graph $G$ is shown in Figure~\ref{g}.
\begin{figure}
\includegraphics[scale=0.3]{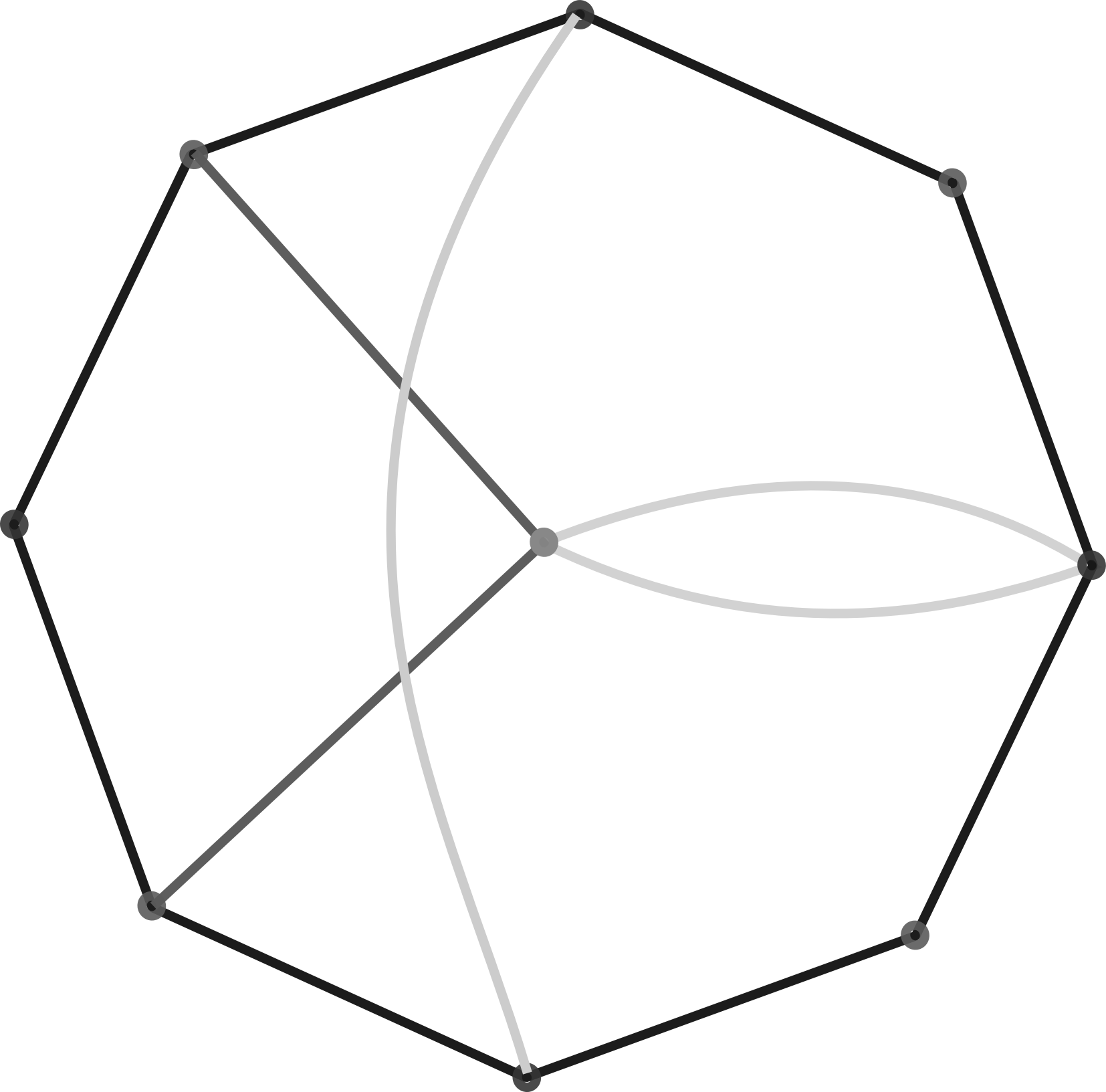}\caption{Graph $G$ for Examples \ref{exx} and \ref{exxx}.}\label{g}
\end{figure}
\end{example}

Further, let us describe an algorithm of the construction of
an extended graph $G_{ext}$. The constructed edges   represent further pieces of critical trajectories of a quadratic differential.
Hence, we specify the correspondence between $G_{ext}$ and the trajectory structure of a quadratic differential.

\begin{remark}\label{doublesupp}
For the construction we need the following rule: 
if the graph $G$ has a double edge with ends at the vertex $O$ and a vertex $b$ of the polygon $\Pi$, then $b$ counts as two vertices with one and the same support.
\end{remark}

\subsection{Algorithm of construction of $G_{ext}$}
By admissibility of $G_h$ and $G_v$ the interior of the polygon $\Pi$ is divided by edges of $G$ into at least four connected components. 
 Pick  a point
 \label{st3}  in each connected component. We call these points {\it component centers}. The component centers represent points of intersection of critical trajectories of a quadratic differential.
 \label{firstchords} 
If the boundary of a component contains vertices of the polygon $\Pi$, then connect the component center with these vertices by line segments.
 \label{secondchords} Whenever boundaries of two connected components share a piece of an edge of $G$, connect the component centres by a line segment.

 \label{st6} 
After the completion of previous steps the interior of the polygon $\Pi$ is divided into triangles and quadrilaterals.
Whenever the boundary of a quadrilateral contains the  vertex $O$ and two pieces of the edges of the same colour, we construct a line segment connecting $O$ with the non-adjacent vertex of the quadrilateral.
 Such a line segment represents a piece of a critical trajectory. 
 This completes the construction of $G_{ext}$

 The edges of $G_{ext}$ divide the interior of the polygon $\Pi$ into the following domains:
\begin{itemize}
\item[(a)] Triangles having a side of $\Pi$ as a side;
\item[(b)] Triangles having only one vertex of $\Pi$ as a vertex. A piece of an edge of $G_h$ or $G_v$ constitutes one of the triangles sides;
\item[(c)] Quadrilateral  having 2 pieces of edges of $G$ of different color as adjacent sides;
\item[(d)] Triangle whose boundary contains the vertex $O$ and a piece of an edge of $G$.
\end{itemize}
Each triangle of type (a) can be identified with a
quadrant. 
The boundary of a triangle of type (b) contains a piece of an edge of $G_h$ or $G_v$ of weight $v$. Then the triangle is identified with a quarter of the strip $\{a<\Im w< b\}$, where $b-a=v$. 
The boundary of a quadrilateral  of type (c) contains a piece of
edge of $G_h$ of weight $v$, and a piece of
edge of $G_v$ of weight $u$. Then the quadrilateral  is identified with the rectangle with sides of length $v$ and $u$.

The union of quadrilaterals and triangles with the vertex $O$ at the boundary is identified with a rectangle, which is a part of a strip. The width of the rectangle is given by the weight of one of the coloured sides of the quadrilaterals and triangles.

Recall that each strip or ending domain of the trajectory structure of a quadratic differential is mapped conformally onto an infinite strip or a half-plane. The identification described above establishes the correspondence between the domains formed by $G_{ext}$ and the domains of the trajectory structure of a quadratic differential.

\begin{example}
Figure~\ref{gext} illustrates 
two copies of the graph $G_{ext}$ corresponding to the graph $G$ from Example~\ref{exxx}.  The dashed lines represent the critical trajectories. The right-hand side copy has the shadowed region representing a strip domain.
\begin{figure}
\includegraphics[scale=0.3]{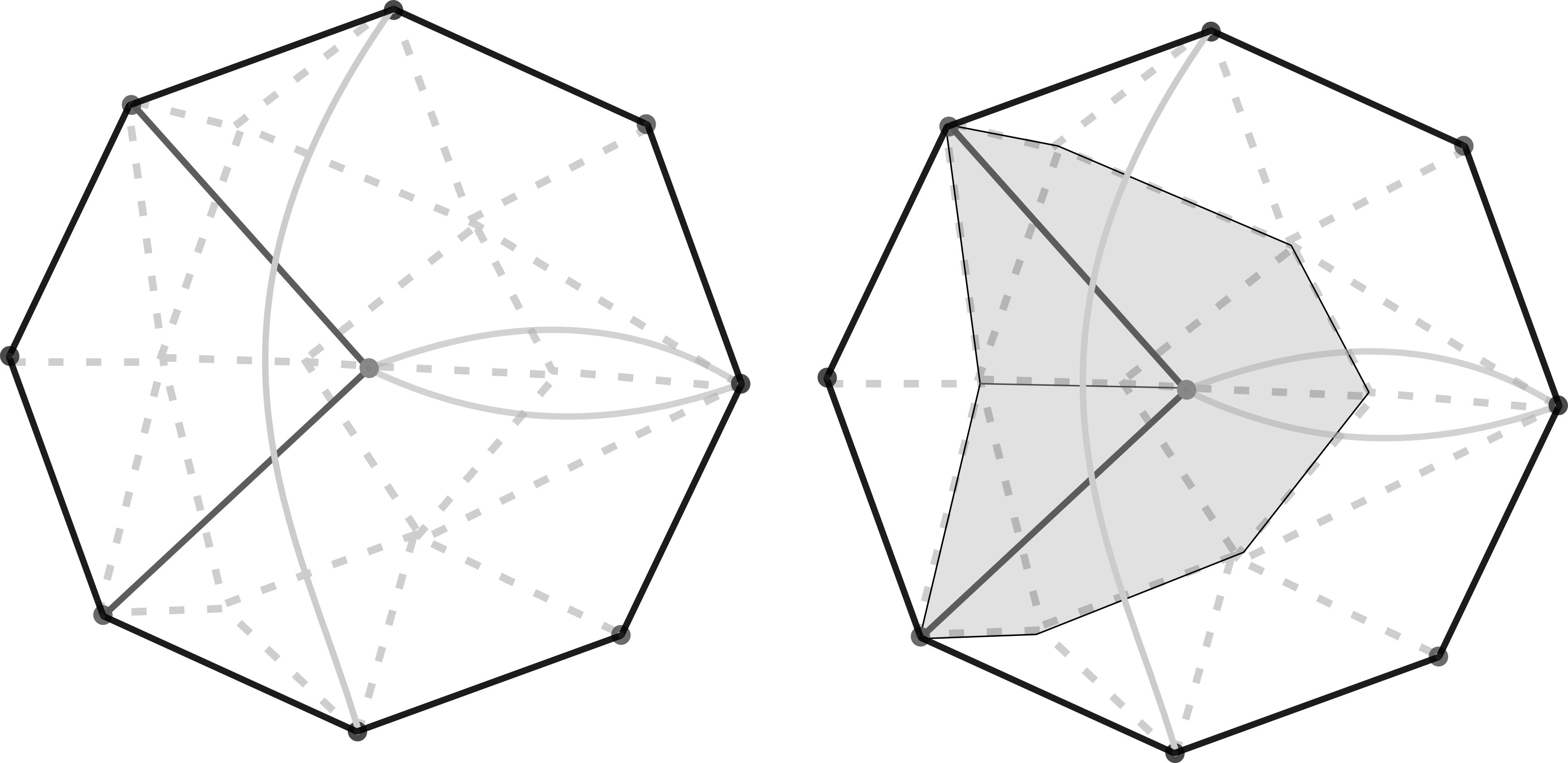}\caption{$G_{ext}$ for the graph $G$ from Example \ref{exxx}. }\label{gext}
\end{figure}
\end{example}

The position and weights of the coloured edges of $G_{ext}$ define uniquely a quadratic differential representing the original pair of graphs $(G_h,G_v).$ The position of coloured edges defines the relative position of the strip domains, while the weights fix their width in the natural metric. More precisely, the mapping $w=\int\sqrt{q(z)}dz$ maps the complex plane
onto a Riemann surface branched at the images of the zeros of $q$ and the regular trajectories are mapped onto the horizontal straight lines  in the $w$-plane.

	\subsection{Correspondence between triangulation and the short trajectories.}

	We established the one-to-one correspondence between quadratic differentials of the form~(\ref{fam}) and pairs of admissible graphs in the previous sections.
	In what follows, let us specify the graphs which represent quadratic differentials with  short trajectories.
	
	We describe how an admissible graph $G_h$ (respectively, $G_v$) gives rise to a weighted chord diagram. Suppose that the graph $G_h$ (respectively, $G_v$) has $k+2$ vertices, and let the vertex $O$ be connected with the vertices $a$ and $b$ by edges $u$ and $v$. We erase the vertex $O$ and replace $u$ and $v$ by a single  edge joining $a$ and $b$. If $a$ and $b$ have the same support, we disunite it, so that  $a$ and $b$ become two separate adjacent vertices. The resulting graph $\Gamma_h$ (respectively, $\Gamma_v$) has $n$ vertices, where 
$n=k$ 	if $a$ and $b$ originally had different supports,
	and  $n=k+1$ if $a$ and $b$ originally had coinciding supports.
	 The graph $\Gamma_h$
(respectively, $\Gamma_v$) is isomorphic to a regular convex $n-$gon with weighted diagonals. 
 This convex realisation of $\Gamma_h$ (respectively, $\Gamma_v$) is exactly the desired weighted chord diagram.
Analogously, a pair of weighted chord diagrams with an appropriate number of vertices gives rise to a pair of admissible graphs.

The diagonals of $\Gamma_h$ (respectively, $\Gamma_v$) generate  triangulation of the $n-$gon. The following lemma provides characterization of quadratic differentials with short trajectories. 
\begin{lemma}\label{kor}
The trajectory structure represented by $G_h$ (respectively, $G_v$) has a short trajectory joining two zeros if and only if the triangulation of the corresponding $n-$gon is incomplete.
\end{lemma}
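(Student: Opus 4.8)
The plan is to prove both directions by translating between the combinatorics of the weighted chord diagram and the geometry of the trajectory structure encoded by $G_h$. The central dictionary is the one just established: the diagonals of $\Gamma_h$ are in bijection with the strip domains of the horizontal trajectory structure, each diagonal's weight recording the width of the corresponding strip, and a \emph{complete} triangulation of the $n$-gon corresponds to the maximal number of strip domains. A short trajectory joining two zeros is a critical horizontal trajectory that begins and ends at zeros of $q$ rather than escaping to the infinite pole. My overall strategy is to argue that the presence of such a short trajectory is exactly the obstruction to having a full complement of strip domains, i.e.\ to completeness of the triangulation.

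First I would set up the forward direction: assume the triangulation of the $n$-gon realizing $\Gamma_h$ is incomplete. By Theorem~\ref{thm1} the corresponding balanced weight is degenerate, so there is a quadrilateral $Q$ (spanned by four vertices $a_1,a_2,a_3,a_4$) whose interior is not crossed by any diagonal. Combinatorially this missing diagonal means that two potential strip configurations coincide in width, so that along the relevant direction the trajectories emanating from two distinct zeros meet rather than separating into an additional strip. I would make this precise by examining the local trajectory picture near the two zeros whose separatrices bound the degenerate region: the absence of the crossing diagonal forces a separatrix issuing from one zero to terminate at the other, which is by definition a short trajectory joining two zeros. The key computational input is that the width of a strip in the $q$-metric equals the length $v$ assigned to its diagonal (from the identification $w=\int\sqrt{q}\,dz$), so a vanishing gap between two competing linear majorants of $f$ translates directly into a trajectory that closes up between the two zeros.

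Conversely, suppose the trajectory structure has a short trajectory $\gamma$ joining two zeros $z_1$ and $z_2$. I would show the triangulation must then be incomplete by exhibiting the forbidden quadrilateral. The short trajectory $\gamma$, together with the two principal directions at infinity that bound the adjacent strip/ending domains, picks out four vertices of the $(k+1)$-gon $\Pi_h$ (equivalently of the $n$-gon after erasing $O$). Because $\gamma$ is a single arc connecting the two zeros rather than a pair of arcs escaping to infinity, the two strip domains that would otherwise sit on either side of a separating diagonal are fused into one region; there is no room for a diagonal to cross the interior of the quadrilateral these four vertices span. Invoking Lemma~\ref{oneside} and Lemma~\ref{sides} in the contrapositive, the non-existence of such a crossing diagonal is precisely degeneracy of the balanced weight, hence incompleteness of the triangulation.

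The main obstacle I anticipate is the precise geometric bookkeeping in the forward direction: one must verify that \emph{every} type of incomplete triangulation produces a short trajectory between two zeros, not merely a short trajectory incident to the simple pole $O$ or a coincidence that leaves the zeros unconnected. In particular the cases distinguished earlier—whether the two $O$-edges land on vertices of different support ($n=k$) or coinciding support ($n=k+1$)—must be handled separately, since the role of the finite pole in bounding strip domains changes the count of separatrices. I would isolate this by a careful case analysis of which domains of type (a)--(d) border the degenerate quadrilateral, showing in each case that the merged domain forces a zero-to-zero connection rather than a zero-to-pole or zero-to-infinity one. Everything else reduces to the already-proven correspondence of Theorem~\ref{thm1} and the width identification, so the geometric case analysis is where the real work lies.
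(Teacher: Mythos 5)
First, a structural point: the paper states Lemma~\ref{kor} with no proof at all --- the lemma is followed immediately by an example and then the parametric-space subsection --- so there is no argument of the authors to compare yours against, and your proposal has to stand entirely on its own. It does not, because in both directions the geometric core of the lemma is asserted rather than proved. In the forward direction, your sentence ``the absence of the crossing diagonal forces a separatrix issuing from one zero to terminate at the other'' \emph{is} the statement being proved; nothing in the proposal derives it. The balanced-weight apparatus you lean on (Theorem~\ref{thm1}, Lemmas~\ref{oneside} and~\ref{sides}) cannot supply this step: those results are purely combinatorial statements relating chord diagrams to piecewise-linear majorants, and they say nothing about quadratic differentials. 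The bridge between the trajectory structure and the diagram $\Gamma_h$ is exactly what Lemma~\ref{kor} is supposed to build, so invoking the combinatorial side ``in the contrapositive'' to produce a trajectory is circular. What a proof needs is input about the foliation itself; for instance, with $k$ simple zeros, the simple pole, and the pole of order $k+3$ at infinity, an Euler-characteristic count of the critical graph gives $\#\{\text{strip domains}\} = k - \#\{\text{saddle connections}\}$ while the number of ending domains remains $k+1$, and completeness of the triangulation must then be matched against maximality of the strip count through the edge--strip dictionary. Nothing of this kind appears in the proposal.

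Second, the issue you flag at the end and postpone --- distinguishing zero--zero connections from zero--pole connections --- is not a routine verification but the actual crux, and deferring it leaves the forward direction unproved. A saddle connection between a zero and the simple pole also destroys a strip domain (the paper's own example $\frac{z^2-1}{z}\,dz^2$ has exactly such a connection along the segment $[-1,0]$), yet the lemma asserts that incompleteness is equivalent to a \emph{zero--zero} connection specifically. So an argument of the shape ``incomplete triangulation $\Rightarrow$ some degeneration $\Rightarrow$ some connection'' proves the wrong statement unless you show the degeneration cannot be accounted for by the pole alone; this is tied to the case split $n=k$ versus $n=k+1$ (the vertex $O$ attached to two distinct adjacent vertices or to one vertex doubled), which is precisely where the pole--zero connections hide in the graph encoding. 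Until that case analysis is actually carried out --- identifying, for each domain adjacent to the empty quadrilateral, which critical points its boundary separatrices join --- you have a plausible plan, not a proof.
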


\begin{example}
Figure \ref{wd} shows the weighted diagrams $\Gamma_h$ and $\Gamma_v$ associated with the graphs $G_h$ and $G_v$ from Example \ref{exx}. As we can see, the triangulations of $\Gamma_h$ and $\Gamma_v$ are incomplete. The corresponding trajectory structures have short trajectories.
\begin{figure}\label{wd}
\includegraphics[scale=0.3]{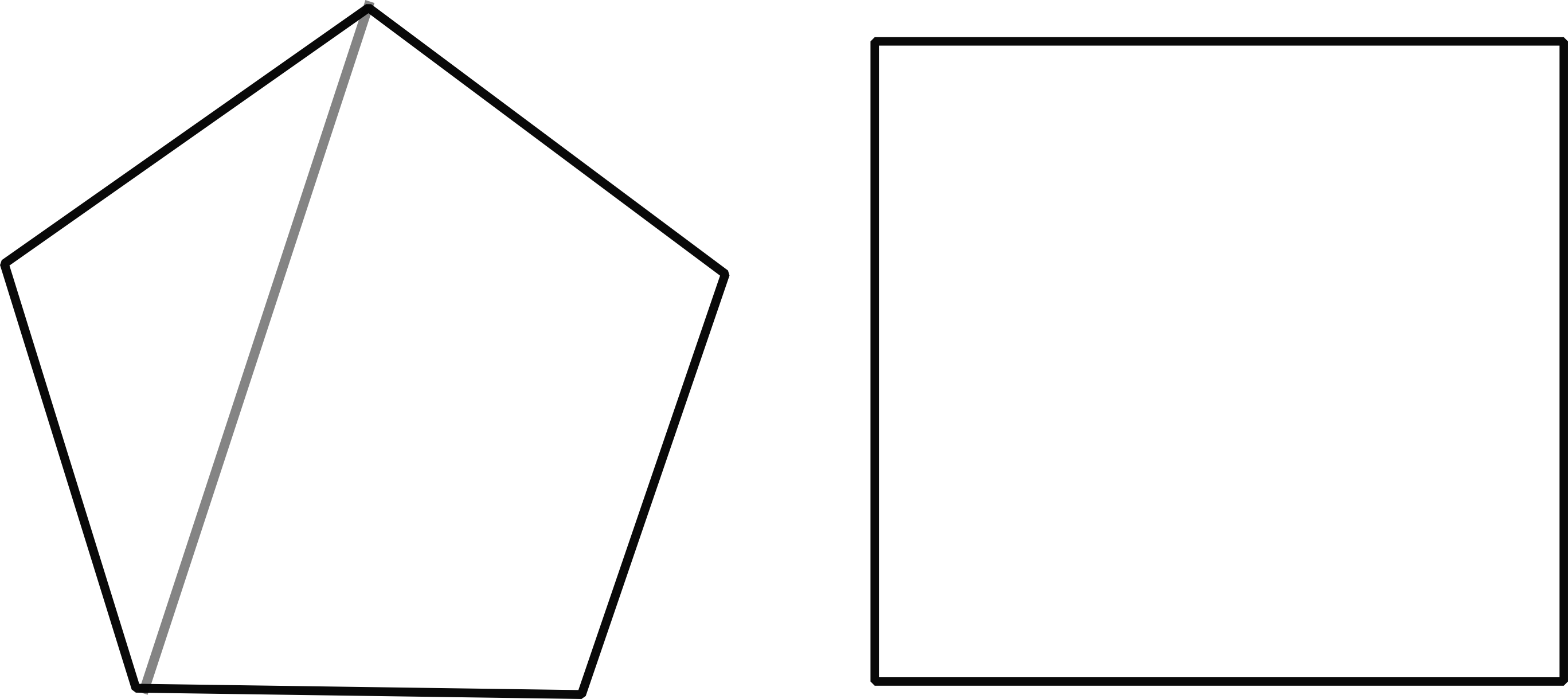}\caption{$\Gamma_h$ and $\Gamma_v$.}
\end{figure}
\end{example}

\begin{subsection}{Parametric space}\label{main}

Our goal is to characterize the set of parameters $S$ in the parameter space $\Lambda\cong \mathbb{R}^{2k}$, for which the corresponding quadratic differential has a short trajectory joining two zeros. The set $S$ naturally splits into the horizontal and the vertical components $S_h$ and $S_v$.

\begin{theorem}
The horizontal and vertical components of the set $S$ have the following form:
\[S_h=\left(\Sigma_k\times \mathbb{R}^{k+2}\right)\cup \left(\Sigma_{k+1}\times \mathbb{R}^{k+1}\right),
\]

\[ S_v=\left(\mathbb{R}^{k+2}\times\Sigma_k\right) \cup \left(\mathbb{R}^{k+1}\times \Sigma_{k+1}\right).
\]
\end{theorem}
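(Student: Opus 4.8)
The plan is to push the entire statement through the dictionary assembled in Sections~\ref{sbar}--\ref{graphs}, and to prove only the formula for $S_h$; the description of $S_v$ then follows formally from the horizontal--vertical duality $q\mapsto -q$ (rotation of the natural parameter by $\pi/2$), under which horizontal trajectories of $q$ become vertical ones, so that $G_h$ and $G_v$ are interchanged and the two tensor factors of $\Lambda\cong\mathbb{R}^{2k}$ are swapped. This turns the second displayed formula into the first with its factors transposed, so the whole problem reduces to identifying $S_h$.

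First I would translate the defining condition into combinatorics. A parameter point lies in $S_h$ exactly when the horizontal trajectory structure of the corresponding $q$ carries a short trajectory joining two zeros. By Lemma~\ref{kor} this occurs if and only if the triangulation of the polygon underlying the weighted chord diagram $\Gamma_h$ is incomplete, and by Theorem~\ref{thm1} incompleteness of that triangulation is equivalent to the associated balanced weight being degenerate, that is, to its lying on one of the non-maximal cones (the walls) of the corresponding Stasheff fan. Hence $S_h$ is precisely the preimage, under the map sending a parameter to its horizontal balanced weight, of the degenerate locus of the relevant Stasheff fan.

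Next comes the case analysis that produces the two terms of the union. The two edges issuing from the pole vertex $O$ of $G_h$ either land on distinct supports or on a single doubled support; accordingly $\Gamma_h$ is a convex polygon on $k+1$ or $k+2$ vertices, whose space of balanced weights has real dimension $k-2$ or $k-1$ and whose degenerate locus is $\Sigma_k$ or $\Sigma_{k+1}$ respectively, again by Theorem~\ref{thm1}. To convert these into the stated products I would introduce global coordinates on $\Lambda$ given by the widths of the horizontal and vertical strip domains (equivalently, the real and imaginary parts of the periods of $\sqrt{q}\,dz$), and check that the horizontal balanced weight is a linear coordinate projection splitting off a factor $\mathbb{R}^{k-2}$ (respectively $\mathbb{R}^{k-1}$), with all remaining coordinates—the vertical data together with the normalization directions removed in passing to a balanced weight—unconstrained. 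Since $2k-(k-2)=k+2$ and $2k-(k-1)=k+1$, the degenerate locus in each stratum is $\Sigma_k\times\mathbb{R}^{k+2}$ and $\Sigma_{k+1}\times\mathbb{R}^{k+1}$, and $S_h$ is their union.

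The main obstacle is this coordinate step: one must produce a single global parametrization of $\Lambda$ in which the horizontal balanced weight appears as a genuine linear factor and the complementary directions are genuinely free, and then verify that the two combinatorial strata (distinct versus coinciding supports at $O$) assemble into exactly the two clean products rather than into some twisted or lower-dimensional gluing. Controlling the closure where the strata meet—where a support is about to split—and confirming that no additional constraints materialize there is where the real work lies; the identities $2k=(k-2)+(k+2)=(k-1)+(k+1)$ serve only as the bookkeeping consistency check.
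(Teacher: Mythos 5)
Your proposal is correct and takes essentially the same route as the paper: the paper's entire proof consists of invoking Theorem~\ref{thm1} and Lemma~\ref{kor} to identify a differential with a short horizontal (respectively, vertical) trajectory with a point of the fan $\Sigma_k$ or $\Sigma_{k+1}$ (the two cases being exactly your distinct-support versus doubled-support dichotomy at the pole vertex $O$), followed by the codimension-one bookkeeping $2k=(k-2)+(k+2)=(k-1)+(k+1)$. The product-structure verification you flag as ``where the real work lies'' is not carried out in the paper either---its proof is two sentences long---so your write-up is, if anything, more explicit about what remains to be checked.
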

\begin{proof}
By Theorem (\ref{thm1}) and Lemma (\ref{kor}) a quadratic differential of the form (\ref{fam}) with a short trajectory can be identified with a point in the fan $\Sigma_k$ or $\Sigma_{k+1}.$ The set $S$ has codimention 1, which leads us to the statement of the theorem.

\end{proof}

\begin{remark}
Quadratic differentials of the form (\ref{fam}) contain a subfamily of quadratic differentials
\begin{equation}\label{degen}
(z^{k-1}+a^{*}_{k-2}z^{k-2}+\dots+a^{*}_0)\,dz^2,
\end{equation}
where $a^{*}_{k-2},\dots,a^{*}_{0}$ are complex parameters.
By Baryshnikov's result \cite{Baryshnikov2001} the bifurcation diagram $S^{*}$ of this family consists of components $S^{*}_v=\mathbb{R}^{k-2}\times \Sigma_{k}$ and $S^{*}_h=\Sigma_{k}\times\mathbb{R}^{k-2}$ in the parameter space $\mathbb{R}^{2(k-2)}.$ Therefore, $S^{*}$ is exactly the subset of $S$ corresponding to quadratic differentials of the form (\ref{degen}).
\end{remark} 
\end{subsection}

\end{document}